\newtheorem{theorem}{Theorem} [section]
\newtheorem{corollary}[theorem]{Corollary}
\newtheorem{lemma}[theorem]{Lemma}
\newenvironment{proof}[1][Proof]{\noindent\textbf{#1.} }{\ \rule{0.5em}{0.5em}}
\begin{document}

\author{Vadim E. Levit\\Department of Computer Science and Mathematics\\Ariel University Center of Samaria, ISRAEL\\levitv@ariel.ac.il
\and Eugen Mandrescu\\Department of Computer Science\\Holon Institute of Technology, ISRAEL\\eugen\_m@hit.ac.il}
\title{On Duality between Local Maximum Stable Sets of a Graph and its Line-Graph}
\date{}
\maketitle

\begin{abstract}
$G$ is a \textit{K\"{o}nig-Egerv\'{a}ry graph }provided $\alpha(G)+\mu
(G)=\left\vert V(G)\right\vert $, where $\mu(G)$ is the size of a maximum
matching and $\alpha(G)$ is the cardinality of a maximum stable set,
\cite{dem}, \cite{ster}.

$S$ is a \textit{local maximum stable set} of $G$, and we write $S\in\Psi(G)$,
if $S$ is a maximum stable set of the subgraph induced by $S\cup N(S)$, where
$N(S)$ is the neighborhood of $S$, \cite{LevMan2}. Nemhauser and Trotter Jr.
proved that any $S\in\Psi(G)$ is a subset of a maximum stable set of $G$,
\cite{NemhTro}.

In this paper we demonstrate that if $S\in\Psi(G)$, the subgraph $H$ induced
by $S\cup N(S)$ is a K\"{o}nig-Egerv\'{a}ry\ graph, and $M$ is a maximum
matching in $H$, then $M$ is a local maximum stable set in the line graph of
$G$.

\textbf{Keywords: } Line graph, K\"{o}nig-Egerv\'{a}ry\ graph, maximum
matching, local maximum stable set.

\end{abstract}

\section{Introduction}

Throughout this paper $G=(V,E)$ is a simple (i.e., a finite, undirected,
loopless and without multiple edges) graph with vertex set $V=V(G)$ and edge
set $E=E(G)$. If $X\subset V$, then $G[X]$ is the subgraph of $G$ spanned by
$X$. By $G-W$ we mean the subgraph $G[V-W]$, if $W\subset V(G)$. We also
denote by $G-F$ the partial subgraph of $G$ obtained by deleting the edges of
$F$, for $F\subset E(G)$, and we write shortly $G-e$, whenever $F$ $=\{e\}$.
If $A,B\subset V$ are disjoint and non-empty, then by $(A,B)$ we mean the set
$\{ab:ab\in E,a\in A,b\in B\}$. 

The \textit{neighborhood} of a vertex $v\in V$ is the set $N(v)=\{w:w\in V$
\ \textit{and} $vw\in E\}$. If $\left\vert N(v)\right\vert =1$, then $v$ is a
\textit{pendant vertex}. We denote the \textit{neighborhood} of $A\subset V$
by $N_{G}(A)=\{v\in V-A:N(v)\cap A\neq\emptyset\}$ and its \textit{closed
neighborhood} by $N_{G}[A]=A\cup N(A)$, or shortly, $N(A)$ and $N[A]$, if no
ambiguity. 

$K_{n},C_{n}$ denote respectively, the complete graph on $n\geq1$ vertices,
and the chordless cycle on $n\geq3$ vertices. A graph having no $K_{3}$ as a
subgraph is a \textit{triangle-free graph}.

A \textit{stable} set in $G$ is a set of pairwise non-adjacent vertices. A
stable set of maximum size will be referred to as a \textit{maximum stable
set} of $G$, and the \textit{stability number }of $G$, denoted by $\alpha(G)$,
is the cardinality of a maximum stable set in $G$. In the sequel, by
$\Omega(G)$ we denote the set of all maximum stable sets of the graph $G$. 

A set $A\subseteq V(G)$ is a \textit{local maximum stable set} of $G$ if $A$
is a maximum stable set in the subgraph spanned by $N[A]$, i.e., $A\in
\Omega(G[N[A]])$, \cite{LevMan2}. Let $\Psi(G)$ stand for the set of all local
maximum stable sets of $G$.

Clearly, every set $S$ consisting of only pendant vertices belongs to
$\Psi(G)$. Nevertheless, it is not a must for a local maximum stable set to
contain pendant vertices. For instance, $\{e,g\}\in\Psi(G)$, where $G$ is the
graph from Figure \ref{fig101}.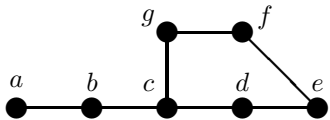
\begin{figure}[h]
\setlength{\unitlength}{1.0cm} \begin{picture}(5,1.5)\thicklines
\multiput(5,0)(1,0){5}{\circle*{0.29}}
\multiput(7,1)(1,0){2}{\circle*{0.29}}
\put(5,0){\line(1,0){4}}
\put(7,1){\line(1,0){1}}
\put(7,0){\line(0,1){1}}
\put(8,1){\line(1,-1){1}}
\put(5,0.35){\makebox(0,0){$a$}}
\put(6,0.35){\makebox(0,0){$b$}}
\put(6.75,0.3){\makebox(0,0){$c$}}
\put(8,0.35){\makebox(0,0){$d$}}
\put(6.75,1.2){\makebox(0,0){$g$}}
\put(8.3,1.2){\makebox(0,0){$f$}}
\put(9,0.3){\makebox(0,0){$e$}}
\end{picture}\caption{A graph having {various local maximum stable sets}.}%
\label{fig101}%
\end{figure}

The following theorem concerning maximum stable sets in general graphs, due to
Nemhauser and Trotter Jr. \cite{NemhTro}, shows that some stable sets can be
enlarged to maximum stable sets.

\begin{theorem}
\cite{NemhTro}\label{th1} Every local maximum stable set of a graph is a
subset of a maximum stable set.
\end{theorem}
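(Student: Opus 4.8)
The plan is to start from an arbitrary maximum stable set $\Gamma\in\Omega(G)$ and surgically replace its trace on $N[S]$ by the local maximum stable set $S$ itself, producing a stable set of $G$ that is no smaller than $\Gamma$ and contains $S$. Concretely, given $S\in\Psi(G)$, I would set $W=S\cup(\Gamma\setminus N[S])$ and aim to prove that $W$ is a maximum stable set of $G$ with $S\subseteq W$.

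First I would verify that $W$ is stable. The two pieces $S$ and $\Gamma\setminus N[S]$ are each stable (the former by hypothesis, the latter as a subset of the stable set $\Gamma$), so the only possible edges of $G[W]$ run between them; but any neighbor of a vertex of $S$ lies in $N(S)\subseteq N[S]$ and is therefore excluded from $\Gamma\setminus N[S]$, so no such edge exists.

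Next I would compare sizes. Writing $\left\vert W\right\vert =\left\vert S\right\vert +\left\vert \Gamma\right\vert -\left\vert \Gamma\cap N[S]\right\vert$, the key observation is that $\Gamma\cap N[S]$ is a stable set of the induced subgraph $G[N[S]]$, and since $S\in\Omega(G[N[S]])$ is a maximum stable set of that subgraph we get $\left\vert \Gamma\cap N[S]\right\vert \leq\left\vert S\right\vert$. Substituting yields $\left\vert W\right\vert \geq\left\vert \Gamma\right\vert =\alpha(G)$, and because $W$ is stable this forces $\left\vert W\right\vert =\alpha(G)$; hence $W\in\Omega(G)$ and $S\subseteq W$, as required.

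The whole argument hinges on the inequality $\left\vert \Gamma\cap N[S]\right\vert \leq\left\vert S\right\vert$, which is precisely the content of local maximality, so the main (and essentially only) point requiring care is recognizing that the trace of any stable set on $N[S]$ cannot beat $S$ there. The rest is the routine bookkeeping of checking stability of $W$ and the cardinality count; no case analysis or delicate estimate is needed.
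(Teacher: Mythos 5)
Your argument is correct: the exchange set $W=S\cup(\Gamma\setminus N[S])$ is stable because every neighbour of $S$ lies in $N[S]$, and the inequality $\left\vert \Gamma\cap N[S]\right\vert \leq\left\vert S\right\vert$ follows exactly from $S\in\Omega(G[N[S]])$, so $W$ is a maximum stable set containing $S$. The paper itself gives no proof of this statement --- it is imported from Nemhauser and Trotter \cite{NemhTro} --- and your argument is the standard proof of that result, so there is nothing in the paper to diverge from.
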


Let us notice that the converse of Theorem \ref{th1} is trivially true,
because $\Omega(G)\subseteq\Psi(G)$. The graph $W$ from Figure \ref{fig101}
has the property that any $S\in\Omega(W)$ contains some local maximum stable
set, but these local maximum stable sets are of different cardinalities:
$\{a,d,f\}\in\Omega(W)$ and $\{a\},\{d,f\}\in\Psi(W)$, while for
$\{b,e,g\}\in\Omega(W)$ only $\{e,g\}\in\Psi(W)$.

However, there exists a graph $G$ satisfying $\Psi(G)=\Omega(G)$, e.g.,
$G=C_{n}$, for $n\geq4$.

A \textit{matching} in a graph $G=(V,E)$ is a set of edges $M\subseteq E$ such
that no two edges of $M$ share a common vertex. A \textit{maximum matching} is
a matching of maximum size, denoted by $\mu(G)$. A matching is \textit{perfect
}if it saturates all the vertices of the graph. A matching $M=\{a_{i}%
b_{i}:a_{i},b_{i}\in V(G),1\leq i\leq k\}$ of a graph $G$ is called \textit{a
uniquely restricted\emph{\ }matching} if $M$ is the unique perfect matching of
$G[\{a_{i},b_{i}:1\leq i\leq k\}]$, \cite{GolHiLew}. Recently, a
generalization of this concept, namely, a \textit{subgraph restricted
matching} has been studied in \cite{GoHeHeLa}.

Kroghdal found that a matching $M$ of a bipartite graph is uniquely restricted
if and only if $M$ is alternating cycle-free (see \cite{Krogdahl}). This
statement was observed for general graphs by Golumbic \textit{et al}. in
\cite{GolHiLew}.

In \cite{LevMan2}, \cite{LevMan45}, \cite{LevMan07}, \cite{LevMan08a},
\cite{LevMan08b} we showed that, under certain conditions involving uniquely
restricted matchings, $\Psi(G)$ forms a greedoid on $V(G)$. The classes of
graphs, where greedoids were found include trees, bipartite graphs,
triangle-free graphs, and well-covered graphs.

Recall that $G$ is a \textit{K\"{o}nig-Egerv\'{a}ry graph }provided
$\alpha(G)+\mu(G)=\left\vert V(G)\right\vert $ (\cite{dem}, \cite{ster}). As a
well-known example, any bipartite graph is a K\"{o}nig-Egerv\'{a}ry graph
(\cite{Eger}, \cite{Koen}). Properties of K\"{o}nig-Egerv\'{a}ry graphs were
discussed in a number of papers, e.g., \cite{bourpull}, \cite{kora},
\cite{KoNgPeis}, \cite{LevMan0}, \cite{LevMan1}, \cite{LevMan3}, \cite{lovpl},
\cite{pasdema}. Let us notice that if $S$ is a stable set and $M$ is a
matching in a graph $G$ such that $\left\vert S\right\vert +\left\vert
M\right\vert =\left\vert V(G)\right\vert $, it follows that $S\in\Omega(G),M$
is a maximum matching, and $G$ is a K\"{o}nig-Egerv\'{a}ry graph, because
$\left\vert S\right\vert +\left\vert M\right\vert \leq\alpha(G)+\mu
(G)\leq\left\vert V(G)\right\vert $ is true for any graph.

The line graph of a graph $G=(V,E)$ is the graph $L(G)=(E,U)$, where
$e_{i}e_{j}\in U$ if $e_{i},e_{j}$ have a common endpoint in $G$.

In this paper we give a sufficient condition in terms of subgraphs of $G$ that
ensure that its line graph $L(G)$ has proper local maximum stable sets. In
other words, we demonstrate that if: $S\in\Psi(G)$, the subgraph $H$ induced
by $S\cup N(S)$ is a K\"{o}nig-Egerv\'{a}ry\ graph, and $M$ is a maximum
matching in $H$, then $M$ is a local maximum stable set in the line graph of
$G$. It turns out that this is also a sufficient condition for a matching of
$G$ to be extendable to a maximum matching.

\section{Maximum matchings and local maximum stable sets}

In a K\"{o}nig-Egerv\'{a}ry graph, maximum matchings have a special property,
emphasized by the following statement.

\begin{lemma}
\cite{LevMan1}\label{match} Every maximum matching $M$ of a
K\"{o}nig-Egerv\'{a}ry graph $G$ is contained in each $(S,V(G)-S)$ and
$\left\vert M\right\vert =\left\vert V(G)-S\right\vert $, where $S\in
\Omega(G)$.
\end{lemma}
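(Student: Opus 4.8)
The plan is to read off the cardinality claim directly from the defining identity of a K\"{o}nig-Egerv\'{a}ry graph, and then to prove the containment by a short counting argument exploiting the stability of $S$.

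First I would dispose of the equality $|M|=|V(G)-S|$. Since $M$ is a maximum matching and $S\in\Omega(G)$, we have $|M|=\mu(G)$ and $|S|=\alpha(G)$. The hypothesis that $G$ is K\"{o}nig-Egerv\'{a}ry, namely $\alpha(G)+\mu(G)=|V(G)|$, then gives $|V(G)-S|=|V(G)|-\alpha(G)=\mu(G)=|M|$, so the second assertion requires no further work and, more importantly, supplies the numerical input for the first.

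For the containment $M\subseteq(S,V(G)-S)$, the key observation is that $S$ is stable, so no edge can have both endpoints in $S$; in particular every edge of $M$ has at least one endpoint in $V(G)-S$. Now I would count the $(V(G)-S)$-endpoints of the edges of $M$: because the edges of a matching are pairwise vertex-disjoint, these endpoints are distinct, so the $|M|$ edges of $M$ cover at least $|M|$ distinct vertices of $V(G)-S$. But we have just shown $|M|=|V(G)-S|$, so there are exactly $|V(G)-S|=|M|$ vertices available in $V(G)-S$. The inequality ``at least $|M|$ covered, at most $|V(G)-S|$ available'' is therefore tight, which forces each edge of $M$ to contribute exactly one endpoint to $V(G)-S$, and hence exactly one endpoint to $S$. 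Thus every edge of $M$ joins a vertex of $S$ to a vertex of $V(G)-S$, i.e. $M\subseteq(S,V(G)-S)$.

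I do not expect a genuine obstacle here: the whole argument rests on the single identity $|M|=|V(G)-S|$ together with the stability of $S$. The one point demanding care is to make the counting estimate tight, so that the pigeonhole step yields \emph{exactly} one endpoint of each edge in each of $S$ and $V(G)-S$ rather than merely ``at least one''; once the vertex-disjointness of $M$ is invoked to justify that the covered $(V(G)-S)$-vertices are distinct, this tightness is immediate and the containment follows.
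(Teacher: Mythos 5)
Your proof is correct. The paper itself gives no proof of this lemma --- it is imported verbatim from the earlier paper \cite{LevMan1} --- so there is nothing in-text to compare against; but your two steps (deriving $|M|=|V(G)-S|$ from $\alpha(G)+\mu(G)=|V(G)|$, then using stability of $S$ plus the vertex-disjointness of $M$ to force exactly one endpoint of each matching edge into $V(G)-S$) form the standard argument, and the pigeonhole tightness is handled properly.
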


For example, $M=\{e_{1},e_{2},e_{3}\}$ is a maximum matching in the
K\"{o}nig-Egerv\'{a}ry graph $H$ (from Figure \ref{Fig11}), $S=\{a,b,c,d\}\in
\Omega(H)$ and $M\subset(S,V(H)-S)$. On the other hand, $M_{1}=\{xz,yv\},M_{2}%
=\{yz,uv\}$ are maximum matchings in the non-K\"{o}nig-Egerv\'{a}ry graph $G$
(depicted in Figure \ref{Fig11}), $S=\{x,y\}\in\Omega(G)$ and $M_{1}%
\subset(S,V(G)-S)$, while $M_{2}\nsubseteq(S,V(G)-S)$.

\begin{figure}[h]
\setlength{\unitlength}{1.0cm} \begin{picture}(5,2.5)\thicklines
\multiput(4,0)(1,0){2}{\circle*{0.29}}
\multiput(4,1)(2,0){2}{\circle*{0.29}}
\put(5,2){\circle*{0.29}}
\put(4,0){\line(1,0){1}}
\put(4,1){\line(1,0){2}}
\put(4,1){\line(1,1){1}}
\put(4,1){\line(1,-1){1}}
\put(5,0){\line(0,1){2}}
\put(5,0){\line(1,1){1}}
\put(5,2){\line(1,-1){1}}
\put(3.7,0){\makebox(0,0){$x$}}
\put(3.7,1){\makebox(0,0){$y$}}
\put(5.3,0){\makebox(0,0){$z$}}
\put(5.3,2){\makebox(0,0){$v$}}
\put(6.3,1){\makebox(0,0){$u$}}
\put(2.9,1){\makebox(0,0){$G$}}
\multiput(9,0)(1,0){3}{\circle*{0.29}}
\multiput(9,1)(1,0){2}{\circle*{0.29}}
\multiput(9,2)(1,0){2}{\circle*{0.29}}
\put(9,0){\line(1,0){2}}
\put(10,0){\line(0,1){2}}
\put(9,0){\line(1,1){1}}
\put(9,1){\line(1,0){1}}
\put(9,1){\line(1,-1){1}}
\put(9,2){\line(1,0){1}}
\put(9,2){\line(1,-1){2}}
\put(9.5,2.2){\makebox(0,0){$e_{1}$}}
\put(9.5,1.2){\makebox(0,0){$e_{2}$}}
\put(10.5,0.2){\makebox(0,0){$e_{3}$}}
\put(8.7,0){\makebox(0,0){$a$}}
\put(8.7,1){\makebox(0,0){$b$}}
\put(8.7,2){\makebox(0,0){$c$}}
\put(11.3,0){\makebox(0,0){$d$}}
\put(7.9,1){\makebox(0,0){$H$}}
\end{picture}\caption{$\{x,y\}\in\Omega(G)$ and $\{a,b,c,d\}\in\Omega(H)$.}%
\label{Fig11}%
\end{figure}
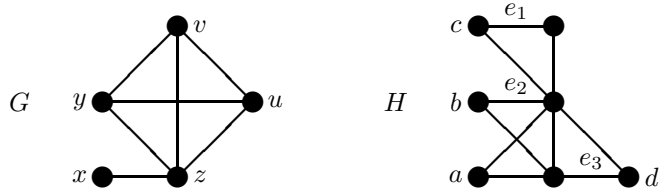

Clearly, (maximum) matchings in a graph $G$ correspond to (maximum,
respectively) stable sets in $L(G)$ and vice versa. However, not every
matching $M$ in $G$ gives birth to a local maximum stable set in $L(G)$, even
if $M$ can be enlarged to a maximum matching. 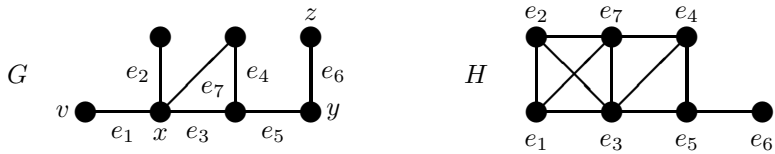
\begin{figure}[h]
\setlength{\unitlength}{1.0cm} \begin{picture}(5,1.9)\thicklines
\multiput(3,0.5)(1,0){4}{\circle*{0.29}}
\multiput(4,1.5)(1,0){3}{\circle*{0.29}}
\put(3,0.5){\line(1,0){3}}
\put(4,0.5){\line(0,1){1}}
\put(4,0.5){\line(1,1){1}}
\put(5,0.5){\line(0,1){1}}
\put(6,0.5){\line(0,1){1}}
\put(3.5,0.2){\makebox(0,0){$e_{1}$}}
\put(3.7,1){\makebox(0,0){$e_{2}$}}
\put(4.5,0.2){\makebox(0,0){$e_{3}$}}
\put(5.5,0.2){\makebox(0,0){$e_{5}$}}
\put(4.7,0.8){\makebox(0,0){$e_{7}$}}
\put(5.3,1){\makebox(0,0){$e_{4}$}}
\put(6.3,1){\makebox(0,0){$e_{6}$}}
\put(4,0.2){\makebox(0,0){$x$}}
\put(6.3,0.5){\makebox(0,0){$y$}}
\put(6,1.8){\makebox(0,0){$z$}}
\put(2.7,0.5){\makebox(0,0){$v$}}
\put(2.1,1){\makebox(0,0){$G$}}
\multiput(9,0.5)(1,0){4}{\circle*{0.29}}
\multiput(9,1.5)(1,0){3}{\circle*{0.29}}
\put(9,0.5){\line(1,0){3}}
\put(9,1.5){\line(1,0){2}}
\multiput(9,0.5)(1,0){3}{\line(0,1){1}}
\multiput(9,0.5)(1,0){2}{\line(1,1){1}}
\put(9,1.5){\line(1,-1){1}}
\put(9,0.1){\makebox(0,0){$e_{1}$}}
\put(9,1.8){\makebox(0,0){$e_{2}$}}
\put(10,0.1){\makebox(0,0){$e_{3}$}}
\put(11,0.1){\makebox(0,0){$e_{5}$}}
\put(10,1.8){\makebox(0,0){$e_{7}$}}
\put(11,1.8){\makebox(0,0){$e_{4}$}}
\put(12,0.1){\makebox(0,0){$e_{6}$}}
\put(8.2,1){\makebox(0,0){$H$}}
\end{picture}\caption{The graph $G$ and its line-graph $H=L(G)$.}%
\label{fig1}%
\end{figure}

For instance, $M_{1}=\{e_{1},e_{6}\},M_{2}=\{e_{3},e_{6}\}$ are both matchings
in the graph $G$ from Figure \ref{fig1}, but only $M_{1}$ is a local maximum
stable set in $L(G)$. Remark that $S_{1}=\{v,z\}\in$ $\Psi(G)$, $S_{2}%
=\{x,y\}\notin\Psi(G)$ and each $M_{i}$ is a maximum matching in $G[N[S_{i}%
]]$, for $i\in\{1,2\}$.

\begin{theorem}
\label{th2}If $S\in\Psi(G),H=G[N[S]]$ is a K\"{o}nig-Egerv\'{a}ry graph, and
$M$ is a maximum matching in $H$, then $M$ is a local maximum stable set in
$L(G)$.
\end{theorem}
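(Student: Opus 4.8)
The plan is to translate the statement into the language of matchings and then exhibit a vertex cover that certifies maximality. Recall that a set of edges of $G$ is a stable set of $L(G)$ precisely when it is a matching of $G$; more generally, for any set of edges $F\subseteq E(G)$ the stable sets of the induced subgraph $L(G)[F]$ are exactly the matchings of $G$ whose edges all lie in $F$. Hence, writing $\mathcal{N}=N_{L(G)}[M]$ for the closed neighborhood of $M$ in $L(G)$ (a set of edges of $G$), the claim ``$M\in\Psi(L(G))$'' is equivalent to ``$M$ is a maximum matching among the edges lying in $\mathcal{N}$''. Since $M\subseteq\mathcal{N}$ is itself such a matching, it remains only to bound the size of an arbitrary matching using edges of $\mathcal{N}$ from above by $\left\vert M\right\vert$.

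First I would extract structural information from the hypotheses. Because $S\in\Psi(G)$ we have $S\in\Omega(H)$ with $H=G[N[S]]$, and $H$ is assumed K\"{o}nig-Egerv\'{a}ry; applying Lemma \ref{match} to $H$ gives that every maximum matching $M$ of $H$ satisfies $M\subseteq(S,V(H)-S)=(S,N(S))$ and $\left\vert M\right\vert =\left\vert V(H)-S\right\vert =\left\vert N(S)\right\vert$. In particular every edge of $M$ joins a vertex of $S$ to a vertex of $N(S)$, and, comparing cardinalities, $M$ saturates every vertex of $N(S)$.

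The heart of the argument is the claim that $N(S)$ is a vertex cover of the edge set $\mathcal{N}$. Indeed, take $e\in\mathcal{N}$. If $e\in M$, then by the previous paragraph $e$ meets $N(S)$. Otherwise $e$ shares an endpoint with some $m=sn\in M$ with $s\in S$ and $n\in N(S)$; if that common endpoint is $n$ we are done, and if it is $s$ then the other endpoint of $e$ is a neighbor of $s$, hence, as $S$ is stable, it cannot lie in $S$ and therefore belongs to $N(S)$. Either way $e$ is incident to $N(S)$, proving the claim. Consequently any matching contained in $\mathcal{N}$ meets the $\left\vert N(S)\right\vert$ vertices of $N(S)$ in distinct vertices, so it has at most $\left\vert N(S)\right\vert =\left\vert M\right\vert$ edges. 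Thus $M$ is a maximum matching inside $\mathcal{N}$, which is exactly the assertion that $M\in\Omega(L(G)[N_{L(G)}[M]])$, i.e. $M\in\Psi(L(G))$.

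I expect the main obstacle to be pinning down $\mathcal{N}$ correctly and realizing that $N(S)$ — rather than the larger set of all $M$-saturated vertices — already covers it; this hinges on combining two facts, namely that $M$ lives in $(S,N(S))$ (from the K\"{o}nig-Egerv\'{a}ry hypothesis via Lemma \ref{match}) and that stability of $S$ forces every edge leaving $S$ to land in $N(S)$. Once the cover is identified, the bound is just the elementary duality between a matching and a vertex cover, and no delicate computation remains. One should also check the harmless boundary case in which vertices of $N(S)$ have neighbors outside $H$: the corresponding edges still belong to $\mathcal{N}$ but remain incident to $N(S)$, so the cover argument is unaffected.
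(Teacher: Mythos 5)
Your proof is correct, and it takes a genuinely different route from the paper's. Both arguments begin the same way, applying Lemma \ref{match} to $H$ to get $M\subseteq(S,N(S))$ and $\left\vert M\right\vert=\left\vert N(S)\right\vert$. From there the paper argues by contradiction: it supposes a larger matching $Q$ exists inside $N_{L(G)}[M]$, forms the symmetric difference $M\triangle Q$, extracts an alternating path with more $Q$-edges than $M$-edges, and rules it out by a case analysis on where the endpoints of that path can lie. You instead give a direct counting certificate: every edge of $N_{L(G)}[M]$ either lies in $M$ (hence meets $N(S)$) or shares an endpoint with some $sn\in M$, and in the latter case stability of $S$ forces it to touch $N(S)$; so $N(S)$ is a vertex cover of the edges of $N_{L(G)}[M]$, and any matching drawn from that edge set has at most $\left\vert N(S)\right\vert=\left\vert M\right\vert$ edges. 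Your argument is shorter, avoids the alternating-path case analysis entirely, and in effect produces a K\"{o}nig-Egerv\'{a}ry-style certificate (a cover of size equal to the matching) for the relevant edge set, which makes the maximality of $M$ in $L(G)[N_{L(G)}[M]]$ transparent; the paper's augmenting-path approach is the more traditional matching-theoretic technique but yields no such explicit dual object. The one point worth stating explicitly in a final write-up is the identity $V(H)-S=N(S)$ (immediate from $V(H)=N[S]$ and $S\cap N(S)=\emptyset$), which you use silently when passing from Lemma \ref{match} to the cover.
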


\begin{proof}
Let $M=\{e_{i}=v_{i}w_{i}:1\leq i\leq\mu(H)\}$. According to Lemma
\ref{match}, it follows that
\[
M\subseteq(S,V(H)-\nolinebreak S)\text{ and }\left\vert M\right\vert
=\left\vert V(H)-\nolinebreak S\right\vert ,
\]
because $H$ is a K\"{o}nig-Egerv\'{a}ry graph. Consequently, without loss of
generality, we may suppose that
\[
\{v_{i}:1\leq i\leq\mu(H)\}\subseteq S\text{, while }V(H)-\nolinebreak
S=\{w_{i}:1\leq i\leq\mu(H)\}.
\]
Since $N_{H}(v_{i})=N_{G}(v_{i})\subseteq N(S)=V(H)-\nolinebreak S$, we have
that
\[
N_{L(G)}[M]=E(H)\cup\{e=wt\in E:w\in V(H)-S,t\notin S\}.
\]
Hence, every $e\in N_{L(G)}[M]-V(L(H))$ is incident in $G$ to some $w_{i}$.

Assume that $M$ is not a maximum stable set in $L(G)$, i.e., there exists some
stable set $Q\subseteq N_{L(G)}[M]$, such that $\left\vert Q\right\vert
>\left\vert M\right\vert $. In other words, $Q$ is a matching using edges
from
\[
E(H)\cup\{e=wt\in E:w\in V(H)-S,t\notin S\},
\]
larger than $M$. Let $F=(M-Q)\cup(Q-M)$. Since $M$ and $Q$ are matchings,
every vertex appearing in $G[F]$ has at most one incident edge from each of
them, and the maximum degree of a vertex in $G[F]$ is $2$. Hence, $G[F]$
consists of only disjoints chordless paths and cycles. Moreover, every path
and every cycle in $G[F]$ alternates between edges of $Q$ and edges of $M$.
Since $\left\vert Q\right\vert >\left\vert M\right\vert $, it follows that
$G[F]$ has a component with more edges of $Q$ than of $M$. Such a component
can only be a path, say $P_{x,y}$, that starts and ends with edges from $Q$
(more precisely, from $Q-M$) and and $x,y$ are not saturated by edges
belonging to $M$. Hence, $P_{x,y}$ must have an odd number of edges.

\emph{Case 1.} $P_{x,y}$ contains only one edge, namely $xy$. This is not
possible, since at least one of the vertices $x,y$ belongs to $V(H)-S$ and is
saturated by $M$.

\emph{Case 2.} $P_{x,y}$ contains at least three edges.

Let $xa,by\in Q$ be the first and the last edges on $P_{x,y}$. Clearly,
$E(P_{x,y})\nsubseteq E(H)$, because, otherwise
\[
(M-E(P_{x,y}))\cup(E(P_{x,y})-M)
\]
is a matching in $H$, larger than $M$, in contradiction with the maximality of
$M$. Hence, $P_{x,y}$ contains edges from $M$, that alternates with edges from
$(E(H)-M)\cup W$, where
\[
W=\{wt\in E(G):w\in V(H)-\nolinebreak S,t\in U\},
\]
with
\[
U=(S-\{v_{i}:1\leq i\leq\mu(H)\})\cup(V(G)-V(H)\}\neq\varnothing.
\]

Therefore, each second vertex on $P_{x,y}$ must belong to $V(H)-S$.
Consequently, we infer that also $y\in V(H)-S$, and hence, it is saturated by
$M$, a contradiction.
\end{proof}

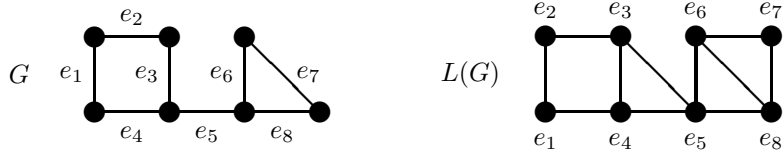
\begin{figure}[h]
\setlength{\unitlength}{1.0cm} \begin{picture}(5,1.8)\thicklines
\multiput(3,0.5)(1,0){4}{\circle*{0.29}}
\multiput(3,1.5)(1,0){3}{\circle*{0.29}}
\put(3,0.5){\line(1,0){3}}
\put(4,0.5){\line(0,1){1}}
\put(3,1.5){\line(1,0){1}}
\put(5,0.5){\line(0,1){1}}
\put(3,0.5){\line(0,1){1}}
\put(5,1.5){\line(1,-1){1}}
\put(2.7,1){\makebox(0,0){$e_{1}$}}
\put(3.5,0.2){\makebox(0,0){$e_{4}$}}
\put(3.7,1){\makebox(0,0){$e_{3}$}}
\put(3.5,1.75){\makebox(0,0){$e_{2}$}}
\put(4.5,0.2){\makebox(0,0){$e_{5}$}}
\put(4.7,1){\makebox(0,0){$e_{6}$}}
\put(5.85,1){\makebox(0,0){$e_{7}$}}
\put(5.5,0.2){\makebox(0,0){$e_{8}$}}
\put(2,1){\makebox(0,0){$G$}}
\multiput(9,0.5)(1,0){4}{\circle*{0.29}}
\multiput(9,1.5)(1,0){4}{\circle*{0.29}}
\put(9,0.5){\line(1,0){3}}
\put(9,0.5){\line(0,1){1}}
\put(9,1.5){\line(1,0){1}}
\put(10,0.5){\line(0,1){1}}
\put(10,1.5){\line(1,-1){1}}
\put(11,0.5){\line(0,1){1}}
\put(11,1.5){\line(1,0){1}}
\put(11,1.5){\line(1,-1){1}}
\put(12,0.5){\line(0,1){1}}
\put(9,0.1){\makebox(0,0){$e_{1}$}}
\put(10,0.1){\makebox(0,0){$e_{4}$}}
\put(10,1.85){\makebox(0,0){$e_{3}$}}
\put(9,1.85){\makebox(0,0){$e_{2}$}}
\put(11,0.1){\makebox(0,0){$e_{5}$}}
\put(11,1.85){\makebox(0,0){$e_{6}$}}
\put(12,1.85){\makebox(0,0){$e_{7}$}}
\put(12,0.1){\makebox(0,0){$e_{8}$}}
\put(8,1){\makebox(0,0){$L(G)$}}
\end{picture}\caption{$M=\{e_{5},e_{7}\}$ is a matching in $G$ and local
maximum stable set in $L(G)$.}%
\label{Fig1}%
\end{figure}

Notice that $M=\{e_{5},e_{7}\}\in\Psi(L(G))$, while there is no $S\in\Psi(G)$,
such that $M$ is a maximum matching in $G[N[S]]$, where $G$ is depicted in
Figure \ref{Fig1}. In other words, the converse of Theorem \ref{th2} is not true.

Clearly, every matching can be enlarged to a maximal matching, which is not
necessarily a maximum matching. For instance, the graph $G$ in Figure
\ref{fig30} does not contain any maximum matching including the matching
$M=\{e_{0},e_{1},e_{2}\}$. The following result shows that, under certain
conditions, a matching\emph{ }can be extended to a maximum
matching.\begin{figure}[h]
\setlength{\unitlength}{1.0cm} \begin{picture}(5,1.5)\thicklines
\multiput(5,0)(2,0){2}{\circle*{0.29}}
\multiput(5,0.5)(2,0){2}{\circle*{0.29}}
\multiput(6,1)(3,0){2}{\circle*{0.29}}
\multiput(8,0)(2,0){2}{\circle*{0.29}}
\multiput(8,0.5)(2,0){2}{\circle*{0.29}}
\multiput(5,0)(2,0){2}{\line(0,1){0.5}}
\multiput(8,0)(2,0){2}{\line(0,1){0.5}}
\multiput(5,0.5)(3,0){2}{\line(2,1){1}}
\multiput(6,1)(3,0){2}{\line(2,-1){1}}
\multiput(5,0)(3,0){2}{\line(1,0){2}}
\put(6,1){\line(1,0){3}}
\put(7.5,1.2){\makebox(0,0){$e_{0}$}}
\put(6,0.2){\makebox(0,0){$e_{1}$}}
\put(9,0.2){\makebox(0,0){$e_{2}$}}
\put(4.2,0.5){\makebox(0,0){$G$}}
\end{picture}\caption{$\{e_{0},e_{1},e_{2}\}$ is a maximal but not a maximum
matching.}%
\label{fig30}%
\end{figure}
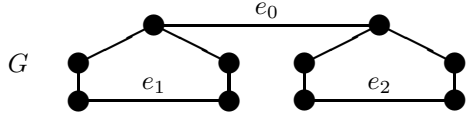

\begin{corollary}
\label{cor1}If $S\in\Psi(G),H=G[N[S]]$ is a K\"{o}nig-Egerv\'{a}ry graph, and
$M$ is a maximum matching in $H$, then there exists a maximum matching $M_{0}$
in $G$ such that $M\subseteq M_{0}$.
\end{corollary}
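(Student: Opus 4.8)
The plan is to produce $M_0$ by composing Theorem \ref{th2} with the Nemhauser–Trotter theorem (Theorem \ref{th1}), passing through the dictionary between matchings of $G$ and stable sets of its line graph recalled at the start of Section 2. First I would observe that the hypotheses of the corollary, namely $S\in\Psi(G)$, $H=G[N[S]]$ a König-Egerváry graph, and $M$ a maximum matching in $H$, are exactly the hypotheses of Theorem \ref{th2}. Applying that theorem gives at once that $M$ is a local maximum stable set of the line graph, i.e. $M\in\Psi(L(G))$.

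Next I would apply Theorem \ref{th1} to the graph $L(G)$ rather than to $G$. Since every local maximum stable set of any graph is contained in some maximum stable set, there exists a set $Q\in\Omega(L(G))$ with $M\subseteq Q$. At this point $M$ has been enlarged inside $L(G)$; it remains only to re-read the conclusion in $G$.

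Finally I would invoke the correspondence stated just before Theorem \ref{th2}: matchings of $G$ correspond to stable sets of $L(G)$, and maximum matchings correspond to maximum stable sets. Interpreting $Q$ as a set of edges of $G$ therefore yields a maximum matching $M_0$ of $G$, and $M\subseteq M_0$ is inherited directly from $M\subseteq Q$. This gives the desired extension.

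I expect no genuinely hard step here, as the argument is a two-fold composition of results already in hand; the one point requiring care is the dictionary. Specifically, one must use that a \emph{maximum} stable set of $L(G)$ corresponds to a \emph{maximum} (not merely maximal) matching of $G$, which is precisely the equivalence recorded before Theorem \ref{th2}. This is exactly where the König-Egerváry hypothesis on $H$ does the work: as the example of Figure \ref{fig30} shows, a maximal matching need not extend to a maximum one, and it is Theorem \ref{th2}, available only under that hypothesis, that places $M$ inside $\Psi(L(G))$ and thus makes the Nemhauser–Trotter enlargement applicable.
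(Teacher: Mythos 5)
Your proposal is correct and follows exactly the paper's own proof: apply Theorem \ref{th2} to place $M$ in $\Psi(L(G))$, then apply Theorem \ref{th1} to $L(G)$ to obtain $M_{0}\in\Omega(L(G))$ containing $M$, and read $M_{0}$ back as a maximum matching of $G$. Your closing remark on why the K\"{o}nig-Egerv\'{a}ry hypothesis is needed (cf.\ Figure \ref{fig30}) is a sound gloss, but the argument itself is the same two-step composition the paper uses.
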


\begin{proof}
According to Theorem \ref{th2}, $M$ is a local maximum stable set in $L(G)$.
By Theorem \ref{th1}, there is some $M_{0}\in\Omega(L(G))$, such that $M$
$\subseteq M_{0}$. Hence, $M_{0}$ is a maximum matching in $G$ containing $M$.
\end{proof}

Let us notice that Corollary \ref{cor1} can not be generalized to any subgraph
of a non-bipartite K\"{o}nig-Egerv\'{a}ry graph. 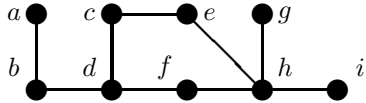
\begin{figure}[h]
\setlength{\unitlength}{1.0cm} \begin{picture}(5,1.5)\thicklines
\multiput(5,0)(1,0){5}{\circle*{0.29}}
\multiput(5,1)(1,0){4}{\circle*{0.29}}
\put(5,0){\line(1,0){4}}
\put(6,0){\line(0,1){1}}
\put(6,1){\line(1,0){1}}
\put(7,1){\line(1,-1){1}}
\put(5,0){\line(0,1){1}}
\put(8,0){\line(0,1){1}}
\put(4.7,1){\makebox(0,0){$a$}}
\put(4.7,0.3){\makebox(0,0){$b$}}
\put(5.7,1){\makebox(0,0){$c$}}
\put(5.7,0.3){\makebox(0,0){$d$}}
\put(7.3,1){\makebox(0,0){$e$}}
\put(6.7,0.3){\makebox(0,0){$f$}}
\put(8.3,1){\makebox(0,0){$g$}}
\put(8.3,0.3){\makebox(0,0){$h$}}
\put(9.3,0.3){\makebox(0,0){$i$}}
\end{picture}
\caption{$M=\{ab,cd,fh\}$ is a maximum matching in $N[\{a,c,f\}]$.}%
\label{fig2929}%
\end{figure}

For instance, the graph $G$ depicted in Figure \ref{fig2929} is a
K\"{o}nig-Egerv\'{a}ry graph, $S=\{a,c,f\}\in\Psi(G)$, and $M=\{ab,cd,fh\}$ is
a maximum matching in $G[N[S]]$, which is not a K\"{o}nig-Egerv\'{a}ry graph,
but there is no maximum matching in $G$ that includes $M$.

Since any subgraph of a bipartite graph is also bipartite, we obtain the
following result.

\begin{corollary}
\label{cor2}If $G$ is a bipartite graph, $S\in\Psi(G)$ and $M$ is a maximum
matching in $G[N[S]]$, then there exists a maximum matching $M_{0}$ in $G$
such that $M\subseteq M_{0}$.
\end{corollary}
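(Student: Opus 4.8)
The plan is to reduce this corollary directly to Corollary \ref{cor1}, whose hypotheses require that $H=G[N[S]]$ be a K\"{o}nig-Egerv\'{a}ry graph. The key point is that when $G$ is bipartite, this K\"{o}nig-Egerv\'{a}ry hypothesis comes for free, so no extra condition on $S$ or $M$ is needed beyond $S\in\Psi(G)$ and $M$ being a maximum matching in $H$.

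First I would observe that $H=G[N[S]]$ is bipartite. Since $H$ is an induced subgraph of $G$, and since any subgraph of a bipartite graph is again bipartite (a proper $2$-coloring of $V(G)$ restricts to a proper $2$-coloring of $V(H)$), the graph $H$ inherits bipartiteness from $G$. Next I would invoke the classical K\"{o}nig-Egerv\'{a}ry theorem (cited as \cite{Eger}, \cite{Koen} in the Introduction), which asserts that every bipartite graph satisfies $\alpha(H)+\mu(H)=\left\vert V(H)\right\vert$; hence $H$ is a K\"{o}nig-Egerv\'{a}ry graph.

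At this stage all three hypotheses of Corollary \ref{cor1} are in place: $S\in\Psi(G)$, the induced subgraph $H=G[N[S]]$ is a K\"{o}nig-Egerv\'{a}ry graph, and $M$ is a maximum matching in $H$. Applying Corollary \ref{cor1} then yields a maximum matching $M_{0}$ in $G$ with $M\subseteq M_{0}$, which is exactly the desired conclusion.

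Regarding the main obstacle: there is no substantive difficulty here, since the real work was already carried out in Theorem \ref{th2} and Corollary \ref{cor1}. This corollary is precisely the remark that the K\"{o}nig-Egerv\'{a}ry assumption on $H$ becomes automatic in the bipartite setting. The only point requiring a moment's care is that the transfer of bipartiteness relies on $H$ being the \emph{induced} subgraph $G[N[S]]$ rather than a partial subgraph obtained by deleting edges; but this holds by the very definition $H=G[N[S]]$, so the argument goes through without complication.
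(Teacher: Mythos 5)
Your proof is correct and follows exactly the route the paper intends: the paper justifies Corollary \ref{cor2} with the single remark that any subgraph of a bipartite graph is bipartite (hence K\"{o}nig-Egerv\'{a}ry by the classical theorem), so that Corollary \ref{cor1} applies. You have simply spelled out that same reduction in full detail.
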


\section{Conclusions}

We showed that there is some connection between $\Psi(G)$ and $\Psi(L(G))$.

Let us notice that there are graphs whose line graphs have no proper local
maximum stable sets (see the graphs in Figure \ref{fig29}).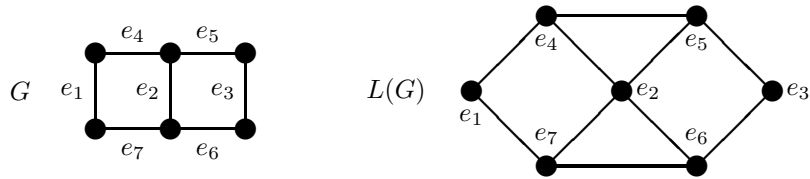
\begin{figure}[h]
\setlength{\unitlength}{1.0cm} \begin{picture}(5,2)\thicklines
\multiput(3,0.5)(1,0){3}{\circle*{0.29}}
\multiput(3,1.5)(1,0){3}{\circle*{0.29}}
\put(3,0.5){\line(1,0){2}}
\put(4,0.5){\line(0,1){1}}
\put(3,1.5){\line(1,0){2}}
\put(5,0.5){\line(0,1){1}}
\put(3,0.5){\line(0,1){1}}
\put(4,0.5){\line(0,1){1}}
\put(2.7,1){\makebox(0,0){$e_{1}$}}
\put(3.5,0.2){\makebox(0,0){$e_{7}$}}
\put(3.7,1){\makebox(0,0){$e_{2}$}}
\put(4.5,1.75){\makebox(0,0){$e_{5}$}}
\put(4.5,0.2){\makebox(0,0){$e_{6}$}}
\put(4.7,1){\makebox(0,0){$e_{3}$}}
\put(3.5,1.75){\makebox(0,0){$e_{4}$}}
\put(2,1){\makebox(0,0){$G$}}
\multiput(9,0)(2,0){2}{\circle*{0.29}}
\multiput(8,1)(2,0){3}{\circle*{0.29}}
\multiput(9,2)(2,0){2}{\circle*{0.29}}
\put(9,0){\line(1,0){2}}
\put(9,2){\line(1,0){2}}
\put(9,0){\line(1,1){2}}
\put(9,2){\line(1,-1){2}}
\put(8,1){\line(1,1){1}}
\put(8,1){\line(1,-1){1}}
\put(11,0){\line(1,1){1}}
\put(11,2){\line(1,-1){1}}
\put(8,0.65){\makebox(0,0){$e_{1}$}}
\put(10.35,1){\makebox(0,0){$e_{2}$}}
\put(12.35,1){\makebox(0,0){$e_{3}$}}
\put(9,1.65){\makebox(0,0){$e_{4}$}}
\put(11,1.65){\makebox(0,0){$e_{5}$}}
\put(11,0.4){\makebox(0,0){$e_{6}$}}
\put(9,0.4){\makebox(0,0){$e_{7}$}}
\put(7,1){\makebox(0,0){$L(G)$}}
\end{picture}\caption{Both $G$ and its line graph $L(G)$ have no local maximum
stable sets.}%
\label{fig29}%
\end{figure}

Moreover, there are graphs whose iterated line graphs have no proper local
maximum stable set, e.g., each $C_{n}$, for $n\geq3$, since $C_{n}$ and
$L(C_{n})$ are isomorphic.

An interesting open question reads as follows. Is it true that for a connected
graph $G$\ the fact that $L(G)$ has no proper local maximum stable sets
implies that $G$ itself does not contain proper local maximum stable sets?

\end{document}